\DeclareMathOperator{\ad}{ad}
\newcommand{\E}{{\mathcal E}}
\newcommand{\p}{\partial}
\newcommand{\Z}{\ensuremath{\mathbb{Z}}}
\newtheorem{lemma}{Lemma}
\newtheorem{theorem}{Theorem}
\begin{document}

\title[An explicit formula for a star product]
{An explicit formula for a star product with separation of variables}
\dedicatory{Dedicated to the memory of Nikolai Neumaier}
\author[Alexander Karabegov]{Alexander Karabegov}
\address[Alexander Karabegov]{Department of Mathematics, Abilene Christian University, ACU Box 28012, Abilene, TX 79699-8012}
\email{axk02d@acu.edu}

\begin{abstract} For a star product with separation of variables $\ast$ on a pseudo-K{\"a}hler manifold we give a simple closed formula of the total symbol of the left star multiplication operator $L_f$ by a given function $f$.
The formula for the star product $f\ast g$ can be immediately recovered from the total symbol of $L_f$.
\end{abstract}
\subjclass[2010]{53D55}
\keywords{deformation quantization with separation of variables}

\date{June 18, 2011}
\maketitle

\section{Introduction}
Given a vector space $W$ and a formal parameter $\nu$, we denote by $W[[\nu]]$ the space of formal vectors $w = w_0 + \nu w_1 + \nu^2 w_2 + \ldots, w_r \in W.$ One can also consider formal vectors that are formal Laurent series in $\nu$ with a finite polar part,
\[
    w = \sum_{r \geq k} \nu^r w_r
\]
with $k \in \Z$.

Let $M$ be a Poisson manifold endowed with a Poisson bracket $\{\cdot,\cdot\}$. A star product $\ast$ on $M$ is an associative product on the space $C^\infty(M)[[\nu]]$ of formal functions on $M$ given by a $\nu$-adically convergent series
\[
    f\ast g = \sum_{r=0}^\infty \nu^r C_r(f,g),
\]
where $C_r$ are bidifferential operators, $C_0(f,g) = fg$, and $C_1(f,g)-C_1(g,f)=i\{f,g\}$ (see \cite{BFFLS}). We also assume that the unit constant is the unity of the star-product $\ast$. A star product can be restricted to an open subset of $M$ and recovered from its restrictions to subsets forming an open covering of $M$. Given functions $f,g \in C^\infty(M)[[\nu]]$, denote by $L_f$ and $R_g$ the left star multiplication operator by $f$ and the right star multiplication by $g$, respectively. Then $L_f g = f \ast g = R_g f$ and the associativity of $\ast$ is equivalent to the property that $[L_f, R_g]=0$ for any $f,g$. The operators $L_f$ and $R_g$ are formal differential operators on $M$. It was proved by Kontsevich in \cite{K} that deformation quantizations exist on arbitrary Poisson manifolds.

A star product is called natural if, for each $r$, the bidifferential operator $C_r$ is of order not greater than $r$ in each of its arguments (see \cite{GR}). We call a formal differential operator $A = A_0 + \nu A_1 + \nu^2 A_2 + \ldots$ natural if the order of $A_r$ is not greater than $r$. If a star product is natural, the operators $L_f$ and $R_f$ for any $f \in C^\infty(M)[[\nu]]$ are natural. The star products of Fedosov \cite{F} and Kontsevich \cite{K} are natural.

Now let $M$ be a pseudo-K{\"a}hler manifold of complex dimension $m$ endowed with a pseudo-K{\"a}hler form $\omega_{-1}$ and the corresponding Poisson bracket $\{\cdot,\cdot\}$. A star product with separation of variables $\ast$ on $M$ is a star product such that the bidifferential operators $C_r$ differentiate the first argument in antiholomorphic directions and the second argument in holomorphic ones (see \cite{CMP1}, \cite{BW}). Star products with separation of variables appear naturally in the context of Berezin quantization (see \cite{Ber}). It was proved in \cite{BW} and \cite{CMP3} that the star products with separation of variables are natural in the sense of \cite{GR}.

A star product on a pseudo-K\"ahler manifold $M$ is a star product with separation of variables if and only if for any local holomorphic function $a$ and a local antiholomorphic function $b$ on $M$ the operators $L_a$ and $R_b$ are pointwise multiplication operators by the functions $a$ and $b$, respectively,
\[
    L_a = a, \ R_b = b.
\]
Otherwise speaking, if $f$ is a local holomorphic or $g$ is a local antiholomorphic function, then $f \ast g = fg$.

A formal form $\omega = \frac{1}{\nu}\omega_{-1} + \omega_0 + \nu \omega_1 + \ldots$ such that the forms $\omega_r, r \geq 1$, are of type (1,1) with respect to the complex structure on $M$ and may be degenerate is called a formal deformation of the pseudo-K\"ahler form $\omega_{-1}$. It was proved in \cite{CMP1} that the star products with separation of variables on a pseudo-K\"ahler manifold $(M, \omega_{-1})$ are bijectively parametrized by the formal deformations of the form $\omega_{-1}$ (see also \cite{N}).

A star product with separation of variables $\ast$ on $(M, \omega_{-1})$ corresponds to a formal deformation $\omega$ of the form $\omega_{-1}$ if for any contractible holomorphic chart $(U, \{z^k, \bar z^l\})$, where $1 \leq k,l \leq m$, and a formal potential $\Phi = \frac{1}{\nu}\Phi_{-1} + \Phi_0 + \nu \Phi_1 + \ldots$ of $\omega$ (i.e., $\omega = i\p\bar \p \Phi$) one has
\[
     R_{\nu\frac{\p\Phi}{\p \bar z^l}} = \nu\left (\frac{\p\Phi}{\p \bar z^l} + \frac{\p}{\p \bar z^l}\right).
\]
The star product with separation of variables $\ast$ parametrized by a given deformation $\omega$ of $\omega_{-1}$ can be constructed as follows. As shown in \cite{CMP1}, for any formal function $f$ on $U$ one can find a unique formal differential operator $A$ on $U$ commuting with the operators $R_{\bar z^l} = \bar z^l$ and $R_{\nu\frac{\p\Phi}{\p \bar z^l}}$ and such that $A1 = f$. This is the left multiplication operator by $f$ with respect to $\ast$, $A = L_f$. In particular, one can immediately check that
\[
   L_{\nu\frac{\p\Phi}{\p z^k}} = \nu\left (\frac{\p\Phi}{\p z^k} + \frac{\p}{\p z^k}\right).
\]
Now, for any formal function $g$ on $U$ we recover the product of $f$ and $g$ as $f \ast g = L_f g$. The local star products parametrized by $\omega$ agree on the intersections of coordinate charts and define a global star product on $M$.

We call the star product with separation of variables parametrized by the trivial deformation $\omega = \frac{1}{\nu}\omega_{-1}$ of $\omega_{-1}$ {\bf standard}.

Explicit formulas for star products with separation of variables on pseudo-K\"ahler manifolds can be given in terms of graphs encoding the bidifferential operators $C_r$ (see \cite{RT}, \cite{G}, \cite {HX}). 

In this paper we give a closed formula expressing the total symbol of the left star multiplication operator $L_f$ of the standard star product with separation of variables $\ast$ on a coordinate chart $U$ of a pseudo-K{\"a}hler manifold $M$ in terms of a family of differential operators on the cotangent bundle $T^\ast U$ acting on symbols of differential operators on $U$. One can immediately recover a formula for the star product $f\ast g$ on $U$ from the total symbol of the operator $L_f$.

\section{A recursive formula for the symbol of the left multiplication operator}

A differential operator $A$ on a real $n$-dimensional manifold $M$ can be written in local coordinates $\{x^i\}$ on a chart $U \subset M$ in a normal form,
\[
    A = p_{i_1 i_2 \ldots i_n}(x)\left(\frac{\p}{\p x^1}\right)^{i_1}\ldots \left(\frac{\p}{\p x^n}\right)^{i_n},
\] 
where summation over repeated indices is assumed. Denote by $\{\xi_i\}$ the dual fibre coordinates on $T^\ast U$. Then the total symbol of $A$ is given by the fibrewise polynomial function
\[
     \tau(A)(x,\xi) = p_{i_1 i_2 \ldots i_n}(x) \left(\xi_1\right)^{i_1} \ldots \left(\xi_n\right)^{i_n}
\]
on $T^\ast U$. The mapping $A \mapsto \tau(A)$ is a bijection of the space of differential operators on $U$ onto the space of fibrewise polynomial functions on the cotangent space $T^\ast U$. The composition of differential operators induces via this bijection an associative operation $\circ$ on the fibrewise polynomial functions on $T^\ast U$.
The composition $\circ$ of fibrewise polynomial functions $p(x,\xi)$ and $q(x,\xi)$ is given by the formula
\begin{align}\label{E:circ}
    (p \circ q)(x, \xi) = \exp \left(\frac{\p}{\p \eta_i}\frac{\p}{\p y^i}\right) p(x, \eta) q(y, \xi) \bigg |_{y=x, \eta = \xi} =\nonumber \\
  \sum_{r=0}^\infty \frac{1}{r!} \frac{\p^r p}{\p \xi_{i_1} \ldots \p \xi_{i_r}}\frac{\p^r q}{\p x^{i_1} \ldots \p x^{i_r}},
\end{align}
where the sum has a finite number of nonzero terms. If $p=p(x)$ or $q = q(\xi)$, then $p \circ q = pq$, which means that the operation $\circ$ has the separation of variables property with respect to the variables $x$ and $\xi$. Formula (\ref{E:circ}) is valid for complex coordinates as well.

Now let $\ast$ be the standard star product with separation of variables on a pseudo-K\"ahler manifold $(M,\omega_{-1})$ of complex dimension $m$. Choose a contractible coordinate chart $(U, \{z^k, \bar z^l\})$ on $M$ and let $\Phi_{-1}$ be a potential of $\omega_{-1}$ on $U$. Given a formal function $f = f_0 + \nu f_1 +\ldots$ on $U$, the left star multiplication operator $L_f$ is the formal differential operator on $U$ determined by the conditions that (i) $L_f 1 = f \ast 1 = f$, (ii) it commutes with the pointwise multiplication operators $R_{\bar z^l} = \bar z^l$, and (iii) it commutes with the operators
\[
    R_{\frac{\p \Phi_{-1}}{\p \bar z^l}} = \frac{\p \Phi_{-1}}{\p \bar z^l} + \nu \frac{\p}{\p \bar z^l}
\]
for $1 \leq l \leq m$. Also, the operator $L_f$ is natural, i.e., $L_f = A_0 + \nu A_1 + \ldots$, where $A_r$ is a differential operator on $U$ of order not greater than $r$. 

Denote by $\{\zeta_k, \bar \zeta_l\}$ the dual fibre coordinates on $T^\ast U$. We want to describe conditions (i) - (iii) on the operator $L_f$ in terms of its total symbol $F = \tau(L_f) = F_0 + \nu F_1 + \ldots$, where $F_r = \tau(A_r)$.
Condition (ii) means that $F$ does not depend on the antiholomorphic fibre variables $\bar \zeta_l$, $F = F(\nu, z, \bar z, \zeta)$.  Condition (i) means that $F|_{\zeta = 0} = f$ and $F_r|_{\zeta = 0} = f_r$. Condition (iii) is expressed as follows:
\begin{equation}\label{E:condiii}
              F\circ \left(\frac{\p \Phi_{-1}}{\p \bar z^l} + \nu \bar \zeta_l\right) = \left(\frac{\p \Phi_{-1}}{\p \bar z^l} + \nu \bar \zeta_l\right) \circ F.
\end{equation}
Using the definition (\ref{E:circ}) of the operation $\circ$ and its separation of variables property we simplify (\ref{E:condiii}):
\begin{equation}\label{E:simpiii}
    F \circ \frac{\p \Phi_{-1}}{\p \bar z^l} + \nu \zeta_l F = \frac{\p \Phi_{-1}}{\p \bar z^l} F + \nu \zeta_l F + \nu \frac{\p F}{\p \bar z^l}.
\end{equation}
We will use the conventional notation,
\[
     g_{k_1 \ldots k_r \bar l} = \frac{\p^{r+1} \Phi_{-1}}{\p z^{k_1} \ldots \p z^{k_r} \p \bar z^l}.
\]
Using (\ref{E:circ}) we simplify (\ref{E:simpiii}) further:
\begin{equation}\label{E:furtheriii}
    \sum_{r=1}^\infty \frac{1}{r!} g_{k_1 \ldots k_r \bar l} \frac{\p^r F}{\p \zeta_{k_1} \ldots \p \zeta_{k_r} }  = \nu \frac{\p F}{\p \bar z^l}.
\end{equation}
In particular, $g_{k\bar l}$ is the metric tensor corresponding to $\omega_{-1}$. We denote its inverse by $g^{\bar l k}$ and introduce the following operators:
\[
     \Gamma_r = g_{k_1 \ldots k_r \bar l}\, g^{\bar l k} \zeta_k \frac{\p^r}{\p \zeta_{k_1} \ldots \p \zeta_{k_r} }  \mbox{ and } D = \nu g^{\bar l k} \zeta_k \frac{\p}{\p \bar z^l}.
\] 
In particular, 
\[
\Gamma_1 = \zeta_k \frac{\p}{\p \zeta_k}
\]
is the Euler operator for the holomorphic fibre variables. Multiplying both sides of (\ref{E:furtheriii}) by $g^{\bar l k}\zeta_k$ and summing over the index $l$, we obtain the formula
\begin{equation}\label{E:form}
     \sum_{r=1}^\infty \frac{1}{r!} \Gamma_r F = D F. 
\end{equation}

We want to assign a grading to the variables $\nu$ and $\zeta_k$ such that $|\nu| = 1$ and $|\zeta_k| = -1$. 
Denote by $\E_p$ the space of formal series in the variables $\nu$ and $\zeta_k$ with coefficients in $C^\infty(U)$ such that the grading of each monomial $f(z,\bar z)\nu^r \zeta_{k_1}\ldots \zeta_{k_s}$ in such a series satisfies $r-s \geq p$.
The spaces $\E_p$ form a descending filtration on the space $\E := \E_0$:
\[
\E = \E_0 \supset \E_1 \supset \ldots.
\]
Since $L_f$ is a natural operator, its total symbol $F = \tau(L_f)$ is an element of $\E$. The operator $\Gamma_r$ acts on $\E$ and raises the filtration by $r-1$. The operator $D$ acts on $\E$ and respects the filtration. Observe that the series on the left-hand side of (\ref{E:form}) converges in the topology induced by the filtration on $\E$.
The space $\E$ breaks into the direct sum of subspaces, $\E = \E' \oplus \E''$, where $\E'$ consists of the elements of $\E$ that do not depend on the fibre variables $\zeta_k$, i.e., $\E' = C^\infty(U)[[\nu]]$, and $\E''$ is the kernel of the mapping $\E \ni H \mapsto H|_{\zeta=0}$. Observe that the Euler operator $\Gamma_1:\E \to \E$ respects the decomposition $\E = \E' \oplus \E''$, $\E'$ is its kernel, and $\E''$ is its image. Moreover, the operator $\Gamma_1$ is invertible on $\E''$. Every operator $\Gamma_k: \E \to \E$ maps $\E$ to $\E''$ and has $\E'$ in its kernel.

The following lemma is straightforward.
\begin{lemma}\label{L:etod}
  The operator $\exp D = \sum_{r=0}^\infty \frac{1}{r!} D^r$ acts on $\E$ and $\exp(- D)$ is its inverse operator on $\E$. The operator $\exp D$ leaves invariant the subspace $\E''$ and the operator $\exp D - 1$ maps $\E$ to $\E''$.
\end{lemma}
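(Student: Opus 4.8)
The plan is to reduce the entire lemma to a single bookkeeping observation about how $D$ acts on the bigrading of a monomial $f(z,\bar z)\nu^r\zeta_{k_1}\ldots\zeta_{k_s}$. Since $D = \nu g^{\bar l k}\zeta_k\, \p/\p\bar z^l$ multiplies by one factor of $\nu$ and one factor of $\zeta$ while merely differentiating the coefficient, it carries such a monomial to a sum of monomials of $\nu$-power $r+1$ and $\zeta$-degree $s+1$. Hence $D$ preserves the grading $r-s$ — so it maps $\E$ into $\E$, as already noted — but strictly raises the $\nu$-power by one and the $\zeta$-degree by one. I would record this as the structural fact from which everything follows.

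First I would show that $\exp D$ is a well-defined operator on $\E$, and this is the one point that needs care. Because $D$ preserves the grading, the series $\sum_j \frac{1}{j!}D^j$ does \emph{not} converge in the filtration topology used for the series in (\ref{E:form}); the convergence here is instead $\nu$-adic. Concretely, $D^j$ raises the $\nu$-power by exactly $j$, so for a fixed power $\nu^a$ only the finitely many terms with $j \leq a$ contribute. Moreover the part of any $H \in \E$ of $\nu$-power $a-j$ has $\zeta$-degree at most $a-j$ (as $H \in \E_0$), so after applying $D^j$ the coefficient of $\nu^a$ acquires $\zeta$-degree at most $a$; thus $\exp(D)H$ is again a genuine element of $\E$.

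Next I would verify that $\exp(-D)$ is the two-sided inverse. Since $D$ commutes with itself and all the composites converge $\nu$-adically, one may reorder freely and compute $\exp(-D)\exp(D) = \sum_n D^n\big(\sum_{j+k=n}\frac{(-1)^k}{j!\,k!}\big) = \sum_n \frac{(1-1)^n}{n!}D^n = \mathrm{id}$, using $\sum_{j+k=n}\frac{(-1)^k}{j!k!} = \delta_{n,0}$; the same computation with the signs interchanged handles the reverse composition.

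Finally, the two claims about $\E''$ drop out of the same grading fact. As $D$ raises the $\zeta$-degree, $D(\E)\subseteq\E''$, so every term of $\exp D - 1 = \sum_{j\geq 1}\frac{1}{j!}D^j$ lands in $\E''$; since $\E''$ is closed under the ($\nu$-adically convergent) sums, $(\exp D - 1)(\E)\subseteq\E''$. For invariance, $D(\E'')\subseteq\E''$ and $\E''$ is a linear subspace closed under such sums, whence $\exp(D)(\E'')\subseteq\E''$. I anticipate no real obstacle; the only subtlety worth flagging is that, unlike the series in (\ref{E:form}), the exponential converges $\nu$-adically rather than with respect to the filtration.
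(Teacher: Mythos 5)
Your proof is correct, and since the paper simply declares this lemma straightforward and omits any argument, your bookkeeping is exactly the intended one: $D$ raises the $\nu$-power and the $\zeta$-degree each by one while preserving the grading $r-s$, which gives well-definedness of $\exp(\pm D)$, their mutual inversion via the binomial identity, and both statements about $\E''$. Your observation that the exponential series converges $\nu$-adically (for each fixed power of $\nu$ only finitely many terms of $\sum_j \frac{1}{j!}D^j$ contribute) rather than in the filtration topology used for (\ref{E:form}) is the one genuinely non-trivial point, and you handle it correctly.
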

\begin{lemma}\label{L:gminnud}
  We have the following identity,
\[
      \Gamma_1 - D = \mathrm{e}^{D}\, \Gamma_1 \mathrm{e}^{- D}.
\]   
\end{lemma}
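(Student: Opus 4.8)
The plan is to expand the conjugation $\mathrm{e}^{D}\Gamma_1\mathrm{e}^{-D}$ by the Hadamard (adjoint) formula and to observe that it terminates after the linear term. Writing $\ad_D(X) = [D,X]$, I would start from the formal identity
\[
\mathrm{e}^{D}\Gamma_1\mathrm{e}^{-D} = \sum_{n=0}^\infty \frac{1}{n!}(\ad_D)^n\Gamma_1 = \Gamma_1 + [D,\Gamma_1] + \frac{1}{2!}\bigl[D,[D,\Gamma_1]\bigr] + \cdots.
\]
This makes sense on $\E$: each application of $D$ inserts one factor of $\nu$, so on any fixed element of $\E$ the series converges $\nu$-adically, and the action of $\mathrm{e}^{\pm D}$ on $\E$ is already provided by Lemma~\ref{L:etod}.

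The key step is the single commutator $[\Gamma_1, D] = D$, equivalently $[D,\Gamma_1] = -D$. Since $\Gamma_1 = \zeta_k\, \p/\p\zeta_k$ is the Euler operator in the holomorphic fibre variables, it reads off the degree in $\zeta$, while $D = \nu g^{\bar l k}\zeta_k\, \p/\p\bar z^l$ is homogeneous of degree one in $\zeta$ (it carries exactly one factor $\zeta_k$ and no $\p/\p\zeta$). Concretely, because the coefficients $\nu g^{\bar l k}$ and the derivations $\p/\p\bar z^l$ commute with $\Gamma_1$, the computation reduces to $[\Gamma_1,\zeta_k] = \zeta_k$, whence $[\Gamma_1, D] = D$.

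With this in hand the expansion collapses: $(\ad_D)^0\Gamma_1 = \Gamma_1$ and $(\ad_D)^1\Gamma_1 = -D$, while $(\ad_D)^2\Gamma_1 = [D,-D] = 0$ and hence every subsequent term vanishes because $D$ commutes with itself. Therefore $\mathrm{e}^{D}\Gamma_1\mathrm{e}^{-D} = \Gamma_1 - D$, which is the asserted identity.

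If one prefers to avoid quoting the Hadamard formula, I would instead set $G(t) = \mathrm{e}^{tD}\Gamma_1\mathrm{e}^{-tD}$, differentiate to get $G'(t) = \mathrm{e}^{tD}[D,\Gamma_1]\mathrm{e}^{-tD} = -D$ (using that $D$ commutes with $\mathrm{e}^{tD}$), and integrate from $G(0) = \Gamma_1$ to reach $G(1) = \Gamma_1 - D$. There is no genuine obstacle here; the only points needing care are the sign and homogeneity bookkeeping in $[\Gamma_1, D] = D$ and the remark that the conjugation series is legitimate on $\E$ by the same $\nu$-adic convergence that underlies Lemma~\ref{L:etod}.
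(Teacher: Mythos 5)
Your proposal is correct and follows essentially the same route as the paper: the paper's proof likewise invokes the Hadamard expansion $\mathrm{e}^{D}\,\Gamma_1\,\mathrm{e}^{-D}=\sum_{r\ge 0}\frac{1}{r!}(\ad D)^r\Gamma_1$ together with the commutator $[\Gamma_1,D]=D$, after which the series terminates at $\Gamma_1-D$. Your additional verification of the commutator via the homogeneity of $D$ in $\zeta$ and the remark on convergence merely fill in details the paper leaves implicit.
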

\begin{proof}
 The lemma follows from the fact that $[\Gamma_1, D] = D$ and the calculation
\[
   \mathrm{e}^{D}\, \Gamma_1 \mathrm{e}^{- D} = \sum_{r=0}^\infty \frac{1}{r!}\left(\ad D \right)^r \Gamma_1 = \Gamma_1 - D.
\]
\end{proof}
Using Lemma \ref{L:gminnud}, we rewrite formula (\ref{E:form}) as follows:
\begin{equation}\label{E:rewrform}
   \left(\mathrm{e}^{D}\, \Gamma_1 \mathrm{e}^{- D} +  \sum_{r=2}^\infty \frac{1}{r!} \Gamma_r\right) F = 0. 
\end{equation}
Introduce the operator
\begin{equation}\label{E:q}
   Q = -\mathrm{e}^{- D}\,\left(\sum_{r=2}^\infty \frac{1}{r!} \Gamma_r\right)\mathrm{e}^{D}
\end{equation}
on $\E$. It raises the filtration on $\E$ by one and maps $\E$ to $\E''$.
Applying the operator $\exp(- D)$ on both sides of (\ref{E:rewrform}) we obtain that
\begin{equation}\label{E:short}
    \left(\Gamma_1 - Q\right)\mathrm{e}^{- D} F = 0. 
\end{equation}
Using the decomposition $\E = \E' \oplus \E''$ and the last statement of Lemma~\ref{L:etod}, we observe that $\exp(- D)F = f + H$ for some $H \in \E''$. We can rewrite formula (\ref{E:short}) as follows:
\begin{equation}\label{E:inhomog}
    \left(\Gamma_1 - Q\right)H = Qf. 
\end{equation}
Since the operator $Q$ maps $\E$ to $\E''$ and $\Gamma_1$ is invertible on $\E''$, the operator $\Gamma_1^{-1}Q$ is well defined on $\E$ and raises the filtration by one, we obtain from (\ref{E:inhomog}) that
\begin{equation}\label{E:invert}
    \left(1 - \Gamma_1^{-1}Q\right)H = \Gamma_1^{-1}Qf. 
\end{equation}
The operator $1 - \Gamma_1^{-1}Q$ is invertible and its inverse is given by the convergent series
\[
    \left(1 - \Gamma_1^{-1}Q\right)^{-1} = \sum_{r=0}^\infty \left(\Gamma_1^{-1}Q\right)^r.
\]
We have
\begin{align*}
   F = \mathrm{e}^{D}(f + H) = \mathrm{e}^{D} \left(f + \left(\sum_{r=0}^\infty \left(\Gamma_1^{-1}Q\right)^r\right) \Gamma_1^{-1}Qf\right) = \\
\mathrm{e}^{D}\left(\sum_{r=0}^\infty \left(\Gamma_1^{-1}Q\right)^r\right)f = \mathrm{e}^{D}\left(1 - \Gamma_1^{-1}Q\right)^{-1}f.
\end{align*}

Combining these arguments we arrive at the following theorem.
\begin{theorem}\label{T:explicit}
    Given the standard star product with separation of variables on a pseudo-K\"ahler manifold $(M,\omega_{-1})$, 
a coordinate chart $U$ on $M$, and a function $f \in C^\infty(U)[[\nu]]$, then the total symbol $F = \tau(L_f)$ of the left star multiplication operator by $f$ is given by the following explicit formula,
\begin{equation}\label{E:explicit}
     F = {e}^{D}\left(1 - \Gamma_1^{-1}Q\right)^{-1}f.
\end{equation}
\end{theorem}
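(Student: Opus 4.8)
The plan is to pin down the symbol $F = \tau(L_f)$ by purely algebraic means and then solve the resulting equation in the filtered space $\E$. Conditions (i)--(iii) on $L_f$, together with naturalness, translate into three requirements on $F$: independence of the antiholomorphic fibre variables $\bar\zeta_l$, the normalization $F|_{\zeta = 0} = f$, and the commutation relation (\ref{E:condiii}) with $R_{\p\Phi_{-1}/\p\bar z^l}$. Working inside $\E$ already builds in the first requirement, since $\E$ is spanned by monomials in $\nu$ and the $\zeta_k$ alone. Expanding the composition $\circ$ through (\ref{E:circ}) and exploiting its separation-of-variables property collapses the third requirement to the single scalar identity (\ref{E:form}). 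So the whole task is to show that (\ref{E:form}) has a unique solution in $\E$ with prescribed restriction $f$ at $\zeta = 0$, and that this solution is (\ref{E:explicit}).

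The next step is to isolate the Euler operator $\Gamma_1$ from the higher $\Gamma_r$, $r \geq 2$. The commutator relation $[\Gamma_1, D] = D$ of Lemma~\ref{L:gminnud} rewrites the $\Gamma_1$-and-$D$ part of (\ref{E:form}) as the conjugated Euler operator $e^{D}\Gamma_1 e^{-D}$, producing (\ref{E:rewrform}); absorbing the remaining terms into the operator $Q$ of (\ref{E:q}) and conjugating by $e^{-D}$, which is invertible on $\E$ by Lemma~\ref{L:etod}, then yields the compact equation (\ref{E:short}), namely $(\Gamma_1 - Q)\,e^{-D}F = 0$. The substitution $G := e^{-D}F$ is the decisive move: it trades the original relation, which couples $\bar z$-derivatives to fibre operators, for one written entirely in terms of the fibre operators $\Gamma_1$ and $Q$.

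The structural engine is the grading $|\nu| = 1$, $|\zeta_k| = -1$ with its induced filtration. Under it $\Gamma_1$ preserves filtration and is invertible on $\E''$, whereas each $\Gamma_r$ with $r \geq 2$, and hence $Q$, strictly raises the filtration by one and maps $\E$ into $\E''$. I would decompose $G = f + H$ along $\E = \E' \oplus \E''$, the component in $\E'$ being forced to equal $f$ because $e^{-D} - 1$ maps into $\E''$ while $F|_{\zeta = 0} = f$. Equation (\ref{E:short}) then turns into the inhomogeneous equation (\ref{E:inhomog}), and applying $\Gamma_1^{-1}$ on $\E''$ gives (\ref{E:invert}). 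Since $\Gamma_1^{-1}Q$ raises the filtration by one, $1 - \Gamma_1^{-1}Q$ is invertible with convergent Neumann-series inverse; solving for $H$, substituting back through $F = e^{D}(f + H)$, and simplifying the geometric series delivers (\ref{E:explicit}).

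I expect the main obstacle to be the filtration bookkeeping rather than any single sharp estimate: one must check that every operator involved ($D$, the $\Gamma_r$, $Q$, and $\Gamma_1^{-1}$ on $\E''$) acts on $\E$ with exactly the asserted effect on the grading, so that all the infinite series in play --- the defining series of $Q$, the exponentials $e^{\pm D}$, and the Neumann series for $(1 - \Gamma_1^{-1}Q)^{-1}$ --- converge in the filtration topology. A second point I would make explicit is that the argument above proves \emph{uniqueness} of a symbol satisfying (i)--(iii); to finish I would either invoke the known existence of $L_f$ from the construction recalled in the introduction, or verify directly that the $F$ defined by (\ref{E:explicit}) satisfies both (\ref{E:form}) and $F|_{\zeta=0} = f$, so that $\tau^{-1}(F)$ is the required operator.
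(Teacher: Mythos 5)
Your proposal is correct and follows essentially the same route as the paper: translating conditions (i)--(iii) into equation (\ref{E:form}), conjugating by $\mathrm{e}^{D}$ via Lemma~\ref{L:gminnud}, decomposing $\mathrm{e}^{-D}F = f + H$ with $H \in \E''$, and inverting $1 - \Gamma_1^{-1}Q$ by a Neumann series convergent in the filtration topology. Your closing remark that the argument establishes uniqueness and that existence of $L_f$ must be supplied by the construction recalled in the introduction is a sound observation, consistent with what the paper implicitly relies on.
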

Now, to find the star product $f \ast g$, one has to calculate the total symbol $F$ of the operator $L_f$ using formula (\ref{E:explicit}), recover $L_f$ from $F$, and apply it to $g$, $f \ast g = L_f g$.

One can use the same formula (\ref{E:explicit}) to express the total symbol of the left multiplication operator $L_f$ of the star product with separation of variables $\ast_\omega$ corresponding to an arbitrary formal deformation $\omega$ of the pseudo-K\"ahler form $\omega_{-1}$. To this end one has to modify the operators $\Gamma_r$ and $D$ as follows. On a contractible coordinate chart $U$ find a formal potential $\Phi = \frac{1}{\nu}\Phi_{-1} + \Phi_0 + \ldots$ of the form 
$\omega$ and set
\[
     G_{k_1 \ldots k_r \bar l} : = \frac{\p^{r+1} \Phi}{\p z^{k_1} \ldots \p z^{k_r} \p \bar z^l}.
\]
Then $G_{k_1 \ldots k_r \bar l} = \frac{1}{\nu}g_{k_1 \ldots k_r \bar l} + \ldots$. Denote the inverse of $G_{k \bar l}$ by $G^{\bar l k} = \nu g^{\bar l k} + \ldots$. Now modify $\Gamma_r$ and $D$ (retaining the same notations) as follows:
\[
     \Gamma_r = G_{k_1 \ldots k_r \bar l}\, G^{\bar l k} \zeta_k \frac{\p^r}{\p \zeta_{k_1} \ldots \p \zeta_{k_r} }  \mbox{ and } D = G^{\bar l k} \zeta_k \frac{\p}{\p \bar z^l}.
\]
The Euler operator $\Gamma_1$ will not change. Define the operator $Q$ by the same formula (\ref{E:q}) with the modified $\Gamma_r$ and $D$. Observe that we get the old operators $\Gamma_r,\ D,$ and $Q$ for the trivial deformation $\omega= \frac{1}{\nu}\omega_{-1}$. One can show along the same lines that formula (\ref{E:explicit}) with the modified operators $D$ and $Q$ will be given by a convergent series in the topology induced by the filtration on $\E$ and will define the total symbol of the left star multiplication operator $L_f$ with respect to the star product~$\ast_\omega$. 

\end{document}